\newcommand{\del}{\partial}
\newtheorem{lemma}{Lemma}
\newtheorem{proposition}{Proposition}
\newtheorem{assumption}{Assumption}
\newtheorem{remark}{Remark}
\crefname{theorem}{Theorem}{Theorems}
\crefname{lemma}{Lemma}{Lemmas}
\crefname{corollary}{Corollary}{Corollaries}
\crefname{assumption}{Assumption}{Assumptions}
\crefname{algorithm}{Algorithm}{Algorithm}
\crefname{section}{Section}{Sections}
\crefname{table}{Table}{Tables}
\crefname{figure}{Figure}{Figures}
\crefname{equation}{}{}
\numberwithin{equation}{section}
\renewcommand*{\@fnsymbol}[1]{\ifcase#1\or*\else\@arabic{\numexpr#1-1\relax}\fi}
\title{Neural network optimal feedback control with enhanced closed loop stability\thanks{This work was supported with funding from the Air Force Office of Scientific Research (AFOSR) under grant FA9550-21-1-0113.}}
\author{Tenavi Nakamura-Zimmerer\footnote{Department of Applied Mathematics, Baskin School of Engineering, University of California, Santa Cruz (tenakamu@ucsc.edu).}
\and
Qi Gong\thanks{Professor, Department of Applied Mathematics, Baskin School of Engineering, University of California, Santa Cruz.}
\and
Wei Kang\thanks{Professor, Department of Applied Mathematics, Naval Postgraduate School, Monterey, CA.}}
\begin{document}

\maketitle

\begin{abstract}
Recent research has shown that supervised learning can be an effective tool for designing optimal feedback controllers for high-dimensional nonlinear dynamic systems. But the behavior of these neural network (NN) controllers is still not well understood. In this paper we use numerical simulations to demonstrate that typical test accuracy metrics do not effectively capture the ability of an NN controller to stabilize a system. In particular, some NNs with high test accuracy can fail to stabilize the dynamics. To address this we propose two NN architectures which locally approximate a linear quadratic regulator (LQR). Numerical simulations confirm our intuition that the proposed architectures reliably produce stabilizing feedback controllers without sacrificing optimality. In addition, we introduce a preliminary theoretical result describing some stability properties of such NN-controlled systems.
\end{abstract}

\section{Introduction}
\label{sec: intro}

\begin{figure}[t!]
\centering
\includegraphics[width = 0.65 \columnwidth]{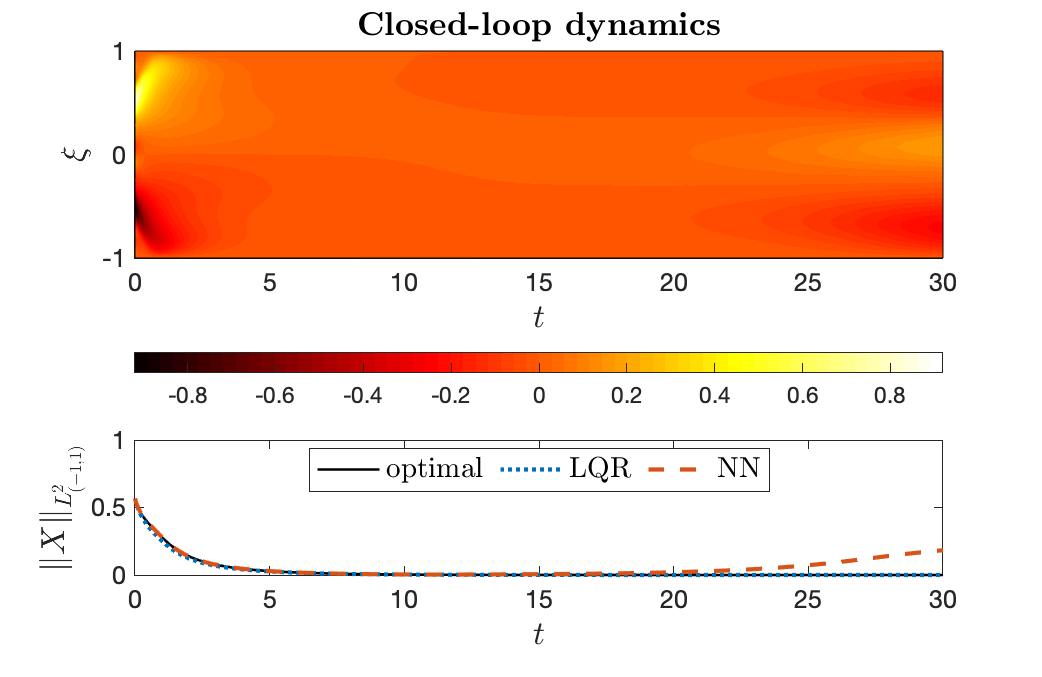}
\includegraphics[width = 0.65 \columnwidth]{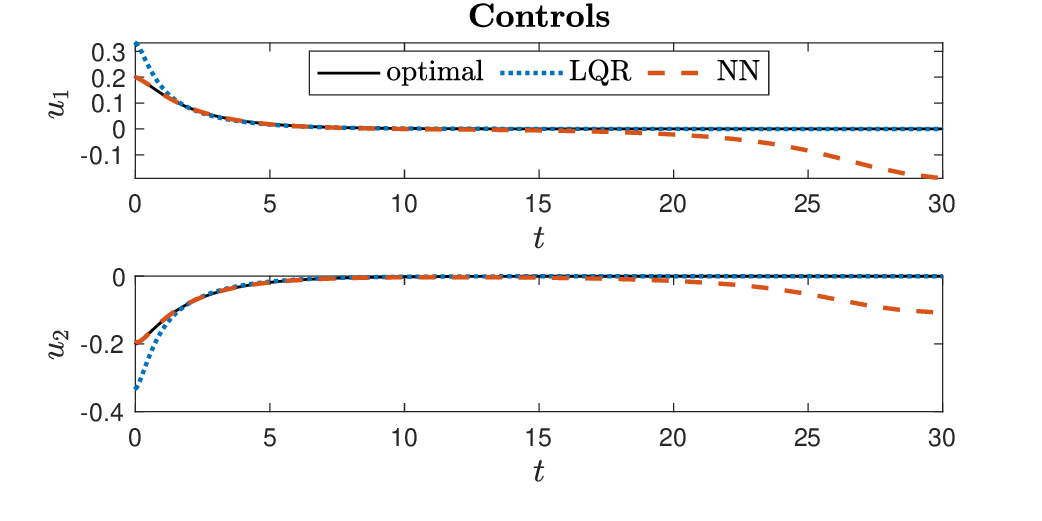}
\caption{Simulation of the Burgers'-type PDE \cref{eq: PS Burgers OCP} with a standard NN feedback controller (no LQR component) exhibiting instability. The top plot shows the evolution of the state $X (t, \xi)$, where $\xi$ is the spatial variable.}
\label{fig: burgers not ultimately bounded}
%Timestamp 1615231723
\end{figure}

Designing optimal feedback controllers for high-dimensional nonlinear systems remains an outstanding challenge for the control community. Even when the system dynamics are known, to design such controllers one needs to solve a Hamilton-Jacobi-Bellman (HJB) partial differential equation (PDE), whose dimension is the same as that of the state space. This leads to the well-known curse of dimensionality, which rules out traditional discretization-based approaches. In recent years, research on supervised learning has demonstrated the promise of these methods for handling challenging, high-dimensional problems.

The core idea of supervised learning is to generate data by solving many open loop optimal control problems and then fit a model to this data set, thus obtaining an approximate optimal feedback controller. Various specific model design and training approaches have been developed within this framework. Earlier work \cite{Kang2015,Kang2017_COA} uses sparse grid interpolation to approximate the solution of the HJB equation -- called the {\em value function} -- and its gradient, which is used to compute the optimal feedback control. This line of work has been futher developed using nonlinear regression with NNs \cite{Izzo2019, Izzo2021_lowthrust, Nakamura2020, Nakamura2021_SIAM, Nakamura2021_LCSS} and sparse polynomials \cite{Azmi2021}, significantly increasing the maximum feasible problem dimension. A variation of the method in \cite{Nakamura2021_SIAM} is proposed by \cite{Chen2020}, in which the value gradient is directly approximated without learning the value function itself, while \cite{Sanchez2018, Tailor2019, Li2020, Izzo2019, Izzo2021_lowthrust} use NNs to directly approximate the control policy without solving the HJB equation.

There are also several closely-related research directions which do not quite fall into the same framework. For example, \cite{Albi2021} propose learning solutions to state-dependent Riccati equations, which yields a suboptimal feedback control law. Many NN-based methods attempt to solve the HJB PDE in the least-squares sense by minimizing the residual of the PDE and boundary conditions at randomly sampled collocation points \cite{Khalaf2005, Tassa2007, Sirignano2018}. Lastly, \cite{Han2018_PNAS, Onken2021, Kunisch2021} propose self-supervised learning approaches to solve the HJB equation along its characteristics without generating any data. Of course, many other methods have been proposed for solving HJB equations and designing feedback controllers, but in the present work we focus on supervised learning approaches as these explicitly quantify error with respect to the optimal control.

Despite promising developments in the methodology, much less work has been done to study and improve the stability and reliability of these NN controllers. To see why this is needed, if we train a set of NNs to control a Burgers'-type PDE system \cref{eq: PS Burgers OCP}, a surprisingly large fraction of these fail to stabilize the system despite having good test accuracy. \Cref{fig: burgers not ultimately bounded} shows a closed loop simulation with one such controller where the NN-controlled trajectory closely tracks the optimal (stable) trajectory until $t \approx 15$ when it goes unstable. Undesirable behavior like this obviates the need for better understanding, more rigorous testing, and more reliable algorithms.

Previously, \cite{Tailor2019} have also pointed out that test accuracy incompletely characterizes the performance of NN controllers, and suggest some more practical evaluations of optimality and stability. Ref. \cite{Izzo2021_stability} study linear stability near a desired equilibrium, linear time delay stability, and stability around a nominal trajectory using high order Taylor maps. Finally, \cite{Nakamura2021_LCSS} propose {\em QRnet}, an NN architecture incorporating an LQR which makes training more reliable.

The purpose of this paper is twofold: first, to bring attention to stability issues with NN-controlled systems; and second, to propose some NN architectures which can potentially mitigate these challenges. We start by describing the problem setting in \cref{sec: problem setting}. In \cref{sec: stability by construction} we propose $\lambda${\em -QRnet} and $u${\em -QRnet}, two NN architectures which retain the stability and robustness properties of {\em QRnet} while ensuring that the desired system equilibrium is always achieved. We emphasize that these controllers are designed not just for stability, but also optimality. In \cref{sec: numerical results} we apply several practical closed loop stability and optimality tests to demonstrate the advantages of the proposed NN architectures. As a testbed we use the Burgers'-type PDE system \cref{eq: PS Burgers OCP}, which is nonlinear, open loop unstable, and high-dimensional. This leads us to \cref{sec: main probability result} in which we consider a new theoretical perspective which probabilistically relates test accuracy and system stability, qualitatively describing what we see in practice that NNs with similar test error can sometimes be stable or unstable. A summary and directions for future work are given in \cref{sec: conclusion}.

%%%%%%%%%%%%%%%%%%%%%%%%%%%%

\section{Problem setting}
\label{sec: problem setting}

We focus our attention on infinite-horizon nonlinear optimal control problems (OCPs) of the form
\begin{equation}
\label{eq: OCP}
\left \{
\begin{array}{cl}
\underset{\bm u (\cdot)}{\text{minimize}} & J \left[ \bm u (\cdot) \right] = \displaystyle \int_0^\infty \mathcal L (\bm x , \bm u) dt , \\
\text{subject to} & \dot {\bm x} (t) = \bm f (\bm x, \bm u) , \\
	& \bm x (0) = \bm x_0 , \\
	& \bm u (t) \in \mathbb U .
\end{array}
\right .
\end{equation}
Here $\bm x : [0, \infty) \to \mathbb R^n$ is the state, $\bm u : [0, \infty) \to \mathbb U \subseteq \mathbb R^m$ is the control, and $\bm f : \mathbb R^n \times \mathbb U \to \mathbb R^n$ is a vector field which is continuously differentiable ($\mathcal C^1$) in $\bm x$ and $\bm u$. We consider box control constraints which can be expressed as
\begin{equation}
\mathbb U = \left \{ \bm u \in \mathbb R^m \left| u_{\text{min}, i} \leq u_i \leq u_{\text{max}, i} , i = 1, \dots, m \right. \right \} ,
\end{equation}
for vectors $\bm u_{\text{max}}, \bm u_{\text{min}} \in \mathbb R^m$ containing the minimum and maximum values for $\bm u (\cdot)$; and we consider running costs $\mathcal L : \mathbb R^n \times \mathbb U \to [0,\infty)$ of the form
\begin{equation}
\label{eq: quadratic cost}
\mathcal L (\bm x, \bm u) = q \left( \bm x - \bm x_f \right) + \left( \bm u - \bm u_f \right)^T \bm R \left( \bm u - \bm u_f \right) .
\end{equation}
Here $\left( \bm x_f , \bm u_f \right) \in \mathbb R^n \times \mathbb U$ is a (possibly unstable) equilibrium of the dynamics such that $\bm f \left( \bm x_f, \bm u_f \right) = \bm 0$, $\bm R \in \mathbb R^{m \times m}$ is a positive definite matrix, and $q : \mathbb R^n \to [0, \infty)$ is a smooth, positive semi-definite function with the Taylor series approximation
\begin{equation}
q \left( \bm x - \bm x_f \right) \approx \left( \bm x - \bm x_f \right)^T \bm Q \left( \bm x - \bm x_f \right) ,
\quad
\bm Q \coloneqq \left. \frac{\del^2 q}{\del \bm x^2} \right|_{\bm x_f} .
\end{equation}
This standard cost function is a natural choice for regularization or set-point tracking problems where we want to stabilize the objective state $\bm x_f$. We make the standard assumption that $\bm u_f$ is contained in an open subset of $\mathbb U$. We also assume the dynamics $\bm f (\cdot)$ are known and that the OCP \cref{eq: OCP} is well-posed, i.e. that an optimal control $\bm u^* (t)$ exists such that $J \left[ \bm u^* (\cdot) \right] < \infty$ and $\lim_{t\rightarrow \infty} \mathcal L \left( \bm x^*(t), \bm u^*(t) \right) = 0$.

\subsection{The Hamilton-Jacobi-Bellman equation}

Due to real-time application requirements, we would like to design a closed loop feedback controller, $\bm u^* (t) = \bm u^* \left( \bm x (t) \right)$, which can be evaluated online given any measurement of $\bm x$. The mathematical framework for designing such an optimal feedback policy is the HJB equation.

Let the value function $V : \mathbb R^n \to \mathbb R$ be the optimal cost-to-go of \cref{eq: OCP} starting at the point $\bm x (0) = \bm x$:
\begin{equation}
\label{eq: value function}
V (\bm x) \coloneqq J \left[ \bm u^* (\cdot) \right] .
\end{equation}
The value function \cref{eq: value function} is the unique viscosity solution \cite{Crandall1983} of the steady state HJB PDE,
\begin{equation}
\label{eq: HJB}
%\begin{dcases*}
\left \{ \begin{array}{l}
\displaystyle \min_{\bm u \in \mathbb U} \left\{ \mathcal L(\bm x, \bm u) + \frac{\del V}{\del \bm x} \bm f (\bm x, \bm u) \right\} = 0 , \\
V \left( \bm x_f \right) = 0 .
%\end{dcases*}
\end{array} \right .
\end{equation}
If \cref{eq: HJB} can be solved (in the viscosity sense), then it provides both necessary and sufficient conditions for optimality. Next we define the Hamiltonian
\begin{equation}
\label{eq: Hamiltonian}
\mathcal H (\bm x, \bm , \bm u) \coloneqq \mathcal L (\bm x, \bm u) + \bm \lambda^T \bm f (\bm x, \bm u) ,
\end{equation}
for $\bm \lambda \in \mathbb R^n$. Given the gradient of value function, $V_{\bm x} \coloneqq \left[ \del V / \del \bm x \right]^T$, we know that the optimal control must satisfy the Hamiltonian minimization condition
\begin{equation}
\label{eq: Hamiltonian minimization condition}
\bm u^* (\bm x) = \bm u^* \left( \bm x; V_{\bm x} (\bm x) \right)
	= \underset{\bm u \in \mathbb U}{\text{arg min}} \, \mathcal H \left( \bm x, V_{\bm x}, \bm u \right) .
\end{equation}
Thus if we can solve \cref{eq: HJB}, the optimal feedback control is obtained online as the solution of \cref{eq: Hamiltonian minimization condition}.

Because solving \cref{eq: HJB} for general nonlinear systems is extremely challenging, a common approach is to linearize the dynamics about $\left( \bm x = \bm x_f, \bm u = \bm u_f \right)$ to get the approximation
\begin{equation}
\label{eq: linearized dynamics}
\left \{ \begin{array}{l}
\dot{\bm x} \approx \bm A \left( \bm x - \bm x_f \right) + \bm B \left( \bm u - \bm u_f \right) , \\
\bm A \coloneqq \left. \frac{\del \bm f}{\del \bm x} \right|_{\bm x_f , \bm u_f} ,
\quad
\bm B \coloneqq \left. \frac{\del \bm f}{\del \bm u} \right|_{\bm x_f , \bm u_f} .
\end{array} \right .
\end{equation}
Under the standard conditions that $(\bm A, \bm B)$ is controllable and $\left( \bm A, \bm Q^{1/2} \right)$ is observable, then the value function of the OCP with linear dynamics \cref{eq: linearized dynamics} and quadratic cost \cref{eq: quadratic cost} is
\begin{equation}
\label{eq: LQR value}
V^{\text{LQR}} (\bm x) = (\bm x - \bm x_f)^T \bm P (\bm x - \bm x_f) ,
\end{equation}
where $\bm P \in \mathbb R^{n \times n}$ is a positive definite matrix satisfying the Riccati equation,
\begin{equation}
\label{eq: CARE}
\bm Q + \bm A^T  \bm P  + \bm {PA} - \bm P \bm{BR}^{-1} \bm B^T \bm P = \bm 0 .
\end{equation}
Furthermore, the resulting state feedback controller is linear with constant gain:
\begin{equation}
\label{eq: LQR control}
\bm u^{\text{LQR}} (\bm x) = \bm u_f - \bm K (\bm x - \bm x_f) ,
\quad
\bm K = \bm R^{-1} \bm B^T \bm P .
\end{equation}

Sufficiently near the equilibrium $\bm x_f$, the LQR value function $V^{\text{LQR}} (\cdot)$ and linear controller $\bm u^{\text{LQR}} (\cdot)$ are good approximations of the true value function $V(\cdot)$ and optimal control $\bm u^* (\cdot)$. But further away from $\bm x_f$, the control is suboptimal and in some cases may even fail to stabilize the nonlinear dynamics. For this reason we are interested in computing the full {\em nonlinear} optimal feedback control $\bm u^* (\cdot)$ over a semi-global domain.

\subsection{Pontryagin's Minimum Principle}

To circumvent the challenge of directly solving the HJB equation \cref{eq: HJB}, we can leverage the necessary conditions for optimality well-known in optimal control as Pontryagin's Minimum Principle (PMP). This is a two-point BVP, which for the OCP \cref{eq: OCP} takes the form \cite{Pontryagin1987}
\begin{equation}
\label{eq: BVP}
\lim_{t_f \to \infty}
\left \{ \begin{array}{ll}
\dot{\bm x} (t) = \bm f (\bm x, \bm u^* (\bm x; \bm \lambda)) ,
	& \bm x (0) = \bm x_0 , \\
\dot{\bm \lambda} (t) = - \mathcal H_{\bm x} (\bm x, \bm \lambda, \bm u^* (\bm x; \bm \lambda)) ,
	& \bm \lambda (t_f) = \bm 0 .
\end{array} \right.
\end{equation}
Here $\bm \lambda : [0, \infty) \to \mathbb R^n$ is called the {\em costate}. If we assume that the solution of \cref{eq: BVP} is optimal, then along the trajectory $\bm x = \bm x^* (t; \bm x_0)$ we have
\begin{equation}
\label{eq: control and value from BVP}
\left \{ \begin{array}{ll}
V (\bm x) = \int_t^\infty \mathcal L \left( \bm x (s) , \bm u^* (s) \right) ds , \\
V_{\bm x} (\bm x) = \bm \lambda (t) ,
\quad \bm u^* (\bm x) = \bm u^* (t) .
\end{array} \right.
\end{equation}
In the finite-horizon case, the finite-horizon BVP describes the characteristics of the time-dependent HJB equation. While the stationary HJB equation \cref{eq: HJB} does not have characteristics in the same sense, by viewing \cref{eq: HJB} as the infinite-horizon limit of the usual time-dependent HJB equation, we can see that it maintains the same relationship with the infinite-horizon BVP \cref{eq: BVP}.

In general, the BVP \cref{eq: BVP} admits multiple solutions. So while the characteristics of the value function satisfy \cref{eq: BVP}, there may be other solutions to these equations which are sub-optimal and thus not characteristics. In certain problems the characteristics can also intersect, giving rise to non-smooth value functions and difficulties in applying \cref{eq: control and value from BVP}. Optimality of solutions to \cref{eq: BVP} can be guaranteed under some convexity conditions (see e.g. \cite{Mangasarian1966}). For most dynamical systems it is difficult to verify such conditions globally, but we can guarantee optimality locally around an equilibrium point \cite{Lukes1969}. Addressing the challenge of global optimality is beyond the scope of the present work, so in this paper we assume that solutions of \cref{eq: BVP} are optimal. Under this assumption, the relationship between PMP and the value function given in \cref{eq: control and value from BVP} holds everywhere.

Note the supervised learning approaches based on PMP can still be applied even when optimality cannot be verified. In such cases PMP remains the prevailing tool for finding {\em candidate optimal} solutions, and from these the proposed method yields a stabilizing feedback controller which satisfies necessary conditions for local optimality.

%%%%%%%%%%%%%%%%%%%%%%%%%%%%%%%%%%%%%%

\section{Optimal feedback design with $\lambda${\em -QRnet} and $u${\em -QRnet} architectures}
\label{sec: stability by construction}

In this paper we consider feedback controllers designed by means of {\em supervised learning} to approximate the optimal control. That is, we learn a feedback control policy
$$
\widehat{\bm u} : \mathbb R^n \to \mathbb U ,
\qquad
\widehat{\bm u} (\bm x) \approx \bm u^* (\bm x) ,
$$
based on data generated with PMP. As discussed in \cref{sec: intro}, one can directly model the optimal control policy, or alternatively, model the value function or its gradient and substitute the approximate gradient into \cref{eq: Hamiltonian minimization condition} to obtain the control.

While previous work has clearly demonstrated the potential of supervised learning with NNs as a means of overcoming the curse of dimensionality in optimal control, NNs are notoriously ``black boxes'' and their behavior -- especially when implemented in the closed loop system -- is hard to predict. Even if we can train a highly accurate NN, it can still fail to stabilize the closed loop system. Further, under an NN-based feedback control the goal state $\bm x_f$ may not be an equilibrium. This motivates us to design feedback controllers with {\em built-in} stability properties.

In this paper we introduce two NN architectures, $\lambda${\em -QRnet} and $u${\em -QRnet}, which automatically make $\bm x_f$ an equilibrium and locally approximate the LQR control \cref{eq: LQR control}. \Cref{sec: new QRnet architecture} describes the proposed architectures, both of which combine a linear term from LQR with an NN. The LQR terms are good approximations of the optimal control near $\bm x_f$, and intuitively enhance local stability. Meanwhile, the NNs are intended to capture nonlinearities and thereby learn the nonlinear optimal feedback over a large domain.

Once we have chosen a model structure, supervised learning can be broken down into three steps: data generation (\cref{sec: data generation}), training (\cref{sec: model training}), and finally model evaluation against test data (\cref{sec: test accuracy}). In \cref{sec: numerical results} we illustrate a more rigorous test regimen specifically for control design, by which we demonstrate that the proposed controllers yield locally stabilizing controllers which closely approximate the nonlinear optimal feedback law.

\subsection{Model architectures}
\label{sec: new QRnet architecture}

We start with $\lambda${\em -QRnet}, which can be readily used if the optimal control is available analytically as a function of $V_{\bm x} (\cdot)$. In this case we implement the feedback control as
\begin{equation}
\label{eq: grad-QRnet control}
\widehat{\bm u} (\bm x) = \bm u^* \left( \bm x; \widehat{\bm \lambda} (\bm x) \right) ,
\end{equation}
where $\widehat{\bm \lambda} (\bm x) \approx V_{\bm x} (\bm x)$ is an approximation of value gradient of the form
\begin{equation}
\label{eq: grad-QRnet architecture}
\widehat{\bm \lambda} (\bm x) = 2 \bm P \left( \bm x - \bm x_f \right) + \bm{\mathcal N} \left( \bm x ; \bm \theta \right) - \bm{\mathcal N} \left( \bm x_f ; \bm \theta \right) .
\end{equation}
Here $\bm{\mathcal N} : \mathbb R^n \times \mathbb R^p \to \mathbb R^n$ is a nonlinear vector-valued $\mathcal C^1$ function (such as an NN) parameterized by $\bm \theta \in \mathbb R^p$. Notice the linear component $2 \bm P \left( \bm x - \bm x_f \right)$ which is the LQR value gradient.

Next we introduce $u${\em -QRnet}, which can be used even if we cannot easily write down $\bm u^* (\cdot)$ in terms of $V_{\bm x} (\cdot)$. In this case we directly approximate the optimal control by

\begin{equation}
\label{eq: ctrl-QRnet architecture}
\widehat{\bm u} (\bm x) = \sigma \left( \bm u_f - \bm K \left( \bm x - \bm x_f \right) + \bm{\mathcal N} \left( \bm x ; \bm \theta \right) - \bm{\mathcal N} \left( \bm x_f ; \bm \theta \right) \right) ,
\end{equation}
where now $\bm{\mathcal N} : \mathbb R^n \times \mathbb R^p \to \mathbb R^m$ instead of $\mathbb R^n$, and $\sigma : \mathbb R^m \to \mathbb R^m$ is a generalized logistic function which smoothly saturates the control:
\begin{equation}
\label{eq: saturation function}
\sigma (\bm u) \coloneqq \bm u_{\text{min}} + \frac{\bm u_{\text{max}} - \bm u_{\text{min}}}{1 + c_1 \exp \left[ - c_2 \left( \bm u - \bm u_f \right) \right]} .
\end{equation}
Here $\bm u_{\text{max}}, \bm u_{\text{min}} \in \mathbb R^m$ are vectors containing the (componentwise) minimum and maximum values for $\bm u (\cdot)$, and $c_1, c_2 > 0$ are constants which we set as
\begin{equation}
\label{eq: c1 c2}
c_1 = \frac{\bm u_{\text{max}} - \bm u_f}{\bm u_f - \bm u_{\text{min}}} ,
\quad
c_2 = \frac{\bm u_{\text{max}} - \bm u_{\text{min}}}{\left( \bm u_{\text{max}} - \bm u_f \right) \left( \bm u_f - \bm u_{\text{min}} \right)} .
\end{equation}
It is straightforward to verify that these choices of $c_1, c_2$ satisfy $\sigma \left( \bm u_f \right) = \bm u_f$ and $\frac{\del \sigma}{\del \bm u} \left( \bm u_f \right) = 1$. Consequently, $\sigma (\cdot)$ smoothly imposes saturation constraints while preserving the unsaturated control behavior near $\bm u_f$. Notice again the linear component $\bm K \left( \bm x - \bm x_f \right)$ which comes from the LQR control.

One of the main advantages of these architectures is that they automatically make the goal state $\bm x_f$ an equilibrium. This is achieved by the $\left[ - \bm{\mathcal N} \left( \bm x_f ; \bm \theta \right) \right]$ term in \cref{eq: grad-QRnet architecture,eq: ctrl-QRnet architecture}, which is also suggested in \cite{Kunisch2021}. This property is formalized in the following proposition, whose proof is straightforward.

\begin{proposition}[$\lambda${\em -QRnet} and $u${\em -QRnet} equilibria]
\label{prop: new QRnet equilibrium}
Assume that $\bm u_f$ is in an open ball contained in $\mathbb U$ and $\widehat{\bm u} (\cdot)$ is a feedback policy specified by \crefrange{eq: grad-QRnet control}{eq: grad-QRnet architecture} or \crefrange{eq: ctrl-QRnet architecture}{eq: c1 c2}. Then $\bm x_f$ is an equilibrium of the NN-controlled system, $\dot{\bm x} = \bm f \left( \bm x, \widehat{\bm u} (\bm x) \right)$.
\end{proposition}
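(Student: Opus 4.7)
The plan is to verify directly for each of the two architectures that substituting $\bm x = \bm x_f$ into $\widehat{\bm u}(\cdot)$ yields $\widehat{\bm u}(\bm x_f) = \bm u_f$, so that $\bm f(\bm x_f, \widehat{\bm u}(\bm x_f)) = \bm f(\bm x_f, \bm u_f) = \bm 0$ by the defining assumption on the equilibrium $(\bm x_f, \bm u_f)$.

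For $\lambda$-QRnet, I would first observe that the linear term $2\bm P(\bm x - \bm x_f)$ and the centered nonlinear increment $\bm{\mathcal N}(\bm x; \bm\theta) - \bm{\mathcal N}(\bm x_f; \bm\theta)$ in \cref{eq: grad-QRnet architecture} both vanish at $\bm x = \bm x_f$, giving $\widehat{\bm\lambda}(\bm x_f) = \bm 0$. Then I would invoke the Hamiltonian minimization definition \cref{eq: Hamiltonian minimization condition}, noting that with $\bm\lambda = \bm 0$ the Hamiltonian reduces to the running cost $\mathcal L(\bm x_f, \bm u) = q(\bm 0) + (\bm u - \bm u_f)^T \bm R(\bm u - \bm u_f)$. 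Since $\bm R$ is positive definite and $\bm u_f$ lies in an open ball contained in $\mathbb U$, the unique unconstrained minimizer $\bm u = \bm u_f$ is feasible, hence $\widehat{\bm u}(\bm x_f) = \bm u^*(\bm x_f; \bm 0) = \bm u_f$.

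For $u$-QRnet, the argument is even more direct: substituting $\bm x = \bm x_f$ into \cref{eq: ctrl-QRnet architecture} cancels the linear LQR correction $\bm K(\bm x - \bm x_f)$ and the centered nonlinear increment, leaving the argument of $\sigma$ equal to $\bm u_f$. The choices of $c_1, c_2$ in \cref{eq: c1 c2} were explicitly designed so that $\sigma(\bm u_f) = \bm u_f$ (as noted in the text immediately following the definition), so again $\widehat{\bm u}(\bm x_f) = \bm u_f$.

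In both cases the conclusion follows by applying $\bm f(\bm x_f, \cdot)$ and using $\bm f(\bm x_f, \bm u_f) = \bm 0$. There is no real obstacle here; the only subtle point is ensuring that the minimizer in the $\lambda$-QRnet case lies in $\mathbb U$, which is exactly why the hypothesis that $\bm u_f$ is in the interior of $\mathbb U$ is invoked. This hypothesis is explicitly included in the proposition statement, so the argument closes cleanly.
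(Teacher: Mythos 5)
Your proof is correct and follows essentially the same route as the paper's: evaluate each architecture at $\bm x = \bm x_f$, obtain $\widehat{\bm u}(\bm x_f) = \bm u_f$, and conclude via $\bm f(\bm x_f, \bm u_f) = \bm 0$. You in fact supply slightly more detail than the paper does on the $\lambda$-QRnet case, by explicitly showing that the Hamiltonian minimization \cref{eq: Hamiltonian minimization condition} with $\widehat{\bm\lambda}(\bm x_f) = \bm 0$ reduces to minimizing $(\bm u - \bm u_f)^T \bm R (\bm u - \bm u_f)$ over $\mathbb U$, whose minimizer $\bm u_f$ is feasible by the interiority hypothesis.
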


\begin{proof}
Evaluating \cref{eq: grad-QRnet architecture} at $\bm x = \bm x_f$ gives $\widehat{\bm \lambda} \left( \bm x_f \right) = \bm 0 = V_{\bm x} \left( \bm x_f \right)$ which implies
$$
\widehat{\bm u} \left( \bm x_f \right)
	= \bm u^* \left( \bm x ; \widehat{\bm \lambda} \left( \bm x_f \right) \right)
	= \bm u^* \left( \bm x ; V_{\bm x} \left( \bm x_f \right) \right)
	= \bm u_f .
$$
Alternatively, evaluating \crefrange{eq: ctrl-QRnet architecture}{eq: c1 c2} at $\bm x = \bm x_f$ yields $\widehat{\bm u} \left( \bm x_f \right) = \sigma \left( \bm u_f \right) = \bm u_f$. It follows that $\dot{\bm x}
	= \bm f \left( \bm x_f , \widehat{\bm u} \left( \bm x_f \right) \right)
	= \bm f \left( \bm x_f, \bm u_f \right)
	= \bm 0$ for both architectures \crefrange{eq: grad-QRnet control}{eq: grad-QRnet architecture} and \crefrange{eq: ctrl-QRnet architecture}{eq: c1 c2}.
\end{proof}

\begin{remark}
Implementing the proposed controllers requires solving the Riccati equation \cref{eq: CARE} to compute the $\bm P$ and $\bm K$ matrices, which can be done numerically for well-posed OCPs. We note that while linearizations of \cref{eq: grad-QRnet architecture,eq: ctrl-QRnet architecture} at $\bm x_f$ do not exactly recover the LQR gain, in practice the difference is usually small. Thus local stability is usually preserved by LQR's large gain and phase margins (see \cref{sec: local stability}).
\end{remark}

\subsection{Related architectures}
\label{sec: QRnet}

The proposed model architectures are similar to {\em QRnet} \cite{Nakamura2021_LCSS}, which approximates the control based on the gradient of a value function model
\begin{equation}
\label{eq: QRnet}
\widehat V (\bm x) = \frac{1}{\gamma} \log \left[ 1 + \gamma V^{\text{LQR}} \left( \bm x \right) \right] + \mathcal N \left( \bm x ; \bm \theta \right) ,
\end{equation}
where $\gamma > 0$ is a trainable parameter and $\mathcal N : \mathbb R^n \times \mathbb R^p \to \mathbb R$ is an NN. Like the controllers introduced in this paper, we can see that \cref{eq: QRnet} combines an NN with the LQR approximation $V^{\text{LQR}} (\cdot)$ to improve performance near $\bm x_f$. Although {\em QRnet} empirically improves stability properties, it does not guarantee that the goal state $\bm x_f$ will be stable, let alone an equilibrium. For this reason, in preliminary experiments we implemented a similar value function model
\begin{equation}
\label{eq: sQRnet}
\widehat V (\bm x) = \frac{1}{\gamma} \log \left[ 1 + \gamma V^{\text{LQR}} \left( \bm x \right) \right] \left[ 1 + \mathcal N \left( \bm x ; \bm \theta \right) \right] .
\end{equation}
Straightforward computations show that an NN controller $\widehat{\bm u} (\bm x) = \bm u^* \left( \bm x ; \widehat V_{\bm x} (\bm x) \right)$ using the gradient $\widehat V_{\bm x} (\cdot)$ of \cref{eq: sQRnet} automatically makes $\bm x_f$ an equilibrium and locally approximates the LQR value function and control up to a scalar multiple $\kappa = 1 + \mathcal N \left( \bm x_f ; \bm \theta \right)$. One can apply the gain margin properties of LQR \cite[Theorem~4]{Wong1977} to show that this controller locally stabilizes $\bm x_f$ as long as $\kappa > 1/2$, which always occurs in practice as a result of NN training. Despite these favorable local properties we found experimentally that, when evaluated on a large test domain, the closed loop performance was no better than a plain NN and significantly worse than than {\em QRnet}.

In contrast, the proposed $\lambda${\em -QRnet} and $u${\em -QRnet} architectures ensure that the goal state $\bm x_f$ is an equilibrium while retaining the enhanced local stability and semi-global performance properties of {\em QRnet}. In addition, while $\lambda${\em -QRnet} and $u${\em -QRnet} approximate higher-dimensional output functions which require larger output layers in the NN, since we do not need to take the gradients of $\widehat V (\cdot)$ with respect to $\bm x$ to evaluate $\widehat{\bm u} (\cdot)$, training time is actually reduced. (see \cref{fig: burgers training}).

\subsection{Data generation}
\label{sec: data generation}

To generate training and testing data for supervised learning, we solve the open loop OCP \cref{eq: OCP} for a set of (randomly sampled) initial conditions. For each open loop solution we apply \cref{eq: control and value from BVP} at each point along the controlled trajectory $\bm x = \bm x (t ; \bm x_0)$ to obtain input-output pairs $\bm x^{(i)}$, $\left( V_{\bm x} \left( \bm x^{(i)} \right), \bm u^* \left( \bm x^{(i)} \right) \right)$, and where the superscript $(i)$ is the sample index. Aggregating data from all open loop solutions\footnote{Note that there is no need to distinguish data from different trajectories as the value function and optimal feedback control are time-independent.}, we obtain a data set
\begin{equation}
\label{eq: data set}
\mathcal D_{\text{train}} = \left \{ \bm x^{(i)}, V_{\bm x} \left( \bm x^{(i)} \right), \bm u^* \left( \bm x^{(i)} \right) \right \}_{i=1}^{N_{\text{train}}} .
\end{equation}
We recall that each open loop OCP can be solved independently without knowledge of nearby solutions, and is related to the closed loop solution by PMP. Methods based on this idea are referred to as {\em causality-free} \cite{Kang2015}.

Algorithms for solving the open loop OCP \cref{eq: OCP} can be broadly classified as {\em indirect} or {\em direct} methods \cite{Betts1998}. Indirect methods take the ``optimize then discretize'' approach, solving the OCP \cref{eq: OCP} by way of the two-point BVP \cref{eq: BVP}. As such, these methods provide both costate and control data and can thus be used either for learning the optimal control law directly or learning the value function.

Direct methods, on the other hand, take the ``discretize then optimize'' approach, transforming the OCP \cref{eq: OCP} into a large nonlinear programming problem. One significant advantage of direct methods is their ability to easily handle more complicated OCPs, such as those with path constraints. In the context of supervised learning, \cite{Tailor2019, Li2020} use Hermite-Simpson direct collocation to generate data for finite-horizon OCPs. Alternatively, Radau pseudospectral methods \cite{Ross2006, Fahroo2008} are ideal for solving infinite-horizon open loop OCPs, though they have not yet been used for supervised learning. We note that pseudospectral methods have the added benefit of the covector mapping principle \cite{Ross2005, Fahroo2008}, which allows one to extract the costates from the solution of the discretized OCP and thus generate data for learning the value function or its gradient.

In this paper we put aside the details of how best to generate data, and assume that we can generate accurate data of the form \cref{eq: data set} as we desire. For more detailed discussions on solving infinite-horizon open loop OCPs \cref{eq: OCP} and data generation methods in supervised learning, we refer the reader to \cite{Kang2021, Nakamura2021_LCSS, Betts1998, Fahroo2008} and references therein.

\subsection{Model training}
\label{sec: model training}

Once a set of training data is available, the next step is training -- i.e. data-driven optimization. If we denote the model parameters by $\bm \theta$ (i.e. the weights and biases of the NN), then the NN is trained by minimizing a mean squared error loss function:
\begin{equation}
\label{eq: NN optimization}
\bm \theta = \underset{\bm \theta}{\text{arg min}} \frac{1}{N_{\text{train}}} \sum_{i=1}^{N_{\text{train}}} \left \Vert \widehat{\bm u} \left( \bm x^{(i)}; \bm \theta \right) - \bm u^* \left( \bm x^{(i)} \right) \right \Vert_2^2 .
\end{equation}
As is standard in machine learning, the models learn on data which has been scaled to the range $[-1,1]$, and the output is accordingly rescaled to the original domain when ultimately used for control.

When training $\lambda${\em -QRnet}, one could optionally augment the loss function \cref{eq: NN optimization} with an additional term to learn the value gradient \cite{Izzo2019, Izzo2021_lowthrust, Nakamura2020, Nakamura2021_SIAM, Chen2020}, for example
\begin{equation}
\text{loss}_{\bm \lambda} (\bm \theta) = \frac{1}{N_{\text{train}}} \sum_{i=1}^{N_{\text{train}}} \left \Vert \widehat{\bm \lambda} \left( \bm x^{(i)}; \bm \theta \right) - V_{\bm x} \left( \bm x^{(i)} \right) \right \Vert_2^2 ,
\end{equation}
and/or a term to minimize the residual of the HJB equation \cref{eq: HJB}. Both $\lambda${\em -QRnet} and $u${\em -QRnet} approaches would also work well in conjunction with active learning methods \cite{Nakamura2021_SIAM}. For both $\lambda${\em -QRnet} and $u${\em -QRnet} we carry out numerical optimization using L-BFGS \cite{Byrd1995} as we find that it is faster than stochastic gradient descent for moderately-sized data sets and NNs.

\subsection{Quantifying model accuracy}
\label{sec: test accuracy}

To quantify the accuracy of the model, we generate a second test data set, $\mathcal D_{\text{test}}$, from {\em independently drawn} initial conditions. During training, the NN sees only data points from the training set $\mathcal D_{\text{train}}$, while $\mathcal D_{\text{test}}$ is reserved for evaluating approximation accuracy after training. A typical metric used in supervised learning is the mean $\ell^2$ error,
\begin{equation}
\label{eq: ML2}
\text{mean } \ell^2 \coloneqq
	\frac{1}{N_{\text{test}}} \sum_{i=1}^{N_{\text{test}}} \left \Vert \widehat{\bm u} \left( \bm x^{(i)} \right) - \bm u^* \left( \bm x^{(i)} \right) \right \Vert_2 ,
\end{equation}
where $N_{\text{test}}$ denotes the number of test points $\bm x^{(i)} \in \mathcal D_{\text{test}}$. A low test error indicates that the NN generalizes well, i.e. it did not overfit the training data. Although less commonly reported, the maximum $\ell^2$ error,
\begin{equation}
\label{eq: max L2}
\text{max } \ell^2 \coloneqq
	\max_{i \in \{ 1, \dots, N_{\text{test}} \}} \left \Vert \widehat{\bm u} \left( \bm x^{(i)} \right) - \bm u^* \left( \bm x^{(i)} \right) \right \Vert_2 ,
\end{equation}
can be more relevant and convenient in the context of determining system stability (see \cref{sec: main probability result}). Throughout the paper we often refer to \cref{eq: max L2} as $\delta_N$, where $N = N_{\text{test}}$.

But even with a low maximum test error, there is a chance that the NN could still perform poorly when implemented in the closed loop system. For this reason we believe that test metrics like \cref{eq: ML2,eq: max L2} are insufficient in the context of control design; we should instead focus on rigorous closed loop stability and performance tests such as those presented in \cref{sec: numerical results}.

%%%%%%%%%%%%%%%%%%%%%%%%%%%%%%%%%%%%%%%%%%%%%%%%%%

\section{Numerical results}
\label{sec: numerical results}

In this section we empirically compare the closed loop stability and optimality properties of LQR, a standard NN which models $\widehat V (\bm x) \approx V (\bm x)$, a standard NN denoted as $u$-NN which directly learns $\widehat{\bm u} (\bm x) \approx \bm u (\bm x)$, {\em QRnet} \cref{eq: QRnet}, $\lambda${\em -QRnet} \crefrange{eq: grad-QRnet control}{eq: grad-QRnet architecture}, and $u${\em -QRnet} \crefrange{eq: ctrl-QRnet architecture}{eq: c1 c2}. We present results for three different tests:
\begin{enumerate}
\item linear stability analysis near $\bm x_f$ (\cref{sec: local stability});
\item Monte Carlo (MC) nonlinear stability (\cref{sec: numerical semi-global stability});
\item MC optimality analysis (\cref{sec: numerical suboptimality}).
\end{enumerate}
Such tests are of course familiar to the control community, but we believe it is worth emphasizing their importance for control design since more rigorous and realistic testing is needed in order to start trusting NN controllers in real-world applications. We also note that these tests are just a starting point: further examples include stabilization time \cite{Tailor2019}, time delay stability \cite{Izzo2021_stability}, and robustness to measurement noise, disturbances, and parameter variations.

The numerical results clearly illustrate that standard NNs are {\em not} consistently stable, even when they have good approximation accuracy. Meanwhile, the results suggest that the proposed $\lambda${\em -QRnet} and $u${\em -QRnet} architectures improve closed loop system stability, essentially decoupling this from the model's approximation accuracy. The $u$-NN controllers are just as likely to be unstable as the NNs which model the value function, indicating that the benefits from the $\lambda${\em -QRnet} and $u${\em -QRnet} architectures come from the added LQR structure -- not because they model the value gradient or control instead the value function.

\subsection{Control of unstable Burgers'-type PDE}
\label{sec: burgers description}

As a testbed we revisit the modified Burgers' stabilization OCP from \cite{Nakamura2021_LCSS}. This is a high-dimensional problem formulated by pseudospectral discretization of an unstable version of a Burgers' PDE. Similar benchmark problems have recently been considered in \cite{Kalise2018, Borggaard2020, Nakamura2020, Nakamura2021_SIAM}.

Briefly, the Burgers' stabilization OCP considered in \cite{Nakamura2021_LCSS} can be summarized as
\begin{equation}
\label{eq: PS Burgers OCP}
\left \{ \begin{array}{cl}
\underset{\bm u (\cdot)}{\text{min.}} & J \left[ \bm u (\cdot) \right] = \displaystyle\int_0^\infty \left( \bm x^T \bm {Qx} + \bm u^T \bm {Ru} \right) dt  , \\
\text{s.t.} & \dot{\bm x} = - \displaystyle \frac{1}{2} \bm D \bm x \circ \bm x + \nu \bm D^2 \bm x + \bm \alpha \circ \bm x \circ e^{- \beta \bm x} + \bm {B u}.
\end{array} \right.
\end{equation}
Here $\bm x : [0, \infty) \to \mathbb R^n$ represents the PDE state $X (t, \xi)$ collocated at spatial coordinates $\xi_j = \cos \left( j \pi / n \right)$, $j = 1, \dots, n$, $\bm u : [0, \infty) \to \mathbb R^m$ is the control, $\bm D \in \mathbb R^{n \times n}$ is the Chebyshev differentiation matrix, $\bm Q \in \mathbb R^{n \times n}, \bm R \in \mathbb R^{m \times m}$ are diagonal positive definite matrices, and ``$\circ$'' denotes elementwise multiplication. The parameters $\nu, \beta > 0, \bm \alpha \in \mathbb R^n$, and $\bm B \in \mathbb R^{n \times m}$ are defined in \cite{Nakamura2021_LCSS}, and we take $n = 64$ and $m = 2$.
 
Using the {\em LQR warm start} data generation strategy \cite{Nakamura2021_LCSS} we generate training and test data sets by solving the BVP \cref{eq: BVP} for randomly generated initial conditions. To get models with varying approximation accuracies, we generate training data sets with different numbers of trajectories (8, 16, 32, 64, and 128). Naturally, the larger the training data set the better the NN model becomes. The standard value function NNs and {\em QRnet} are trained as described in \cite{Nakamura2021_LCSS}. To be consistent, all NNs use five hidden layers with 32 neurons each.

Note that because data generation depends on random sampling and \cref{eq: lambda optimization} is a highly non-convex optimization problem, results can vary considerably for different random seeds. To account for this, for each different data set size we conduct ten trials with different randomly generated training trajectories and NN weight initializations. We evaluate test metrics \cref{eq: ML2,eq: max L2} on an independent test data set containing 400 trajectories totaling $N_{\text{test}} = 66060$ data points.

Optimization of \cref{eq: NN optimization} is carried out with L-BFGS \cite{Byrd1995}, which stops when the relative change in the loss is sufficiently small. \Cref{fig: burgers training} shows training times of the NNs, where we see that models which directly approximate the control are significantly faster to train. This may be surprising at first since these NNs approximate higher-dimensional functions, and thus have larger output layers. But since we do not have to take gradients of $\widehat V (\cdot)$ with respect to $\bm x$ to evaluate $\widehat{\bm u} (\cdot)$, training time is actually reduced.

\begin{figure}[t!]
\centering
\includegraphics[width = 0.65 \columnwidth]{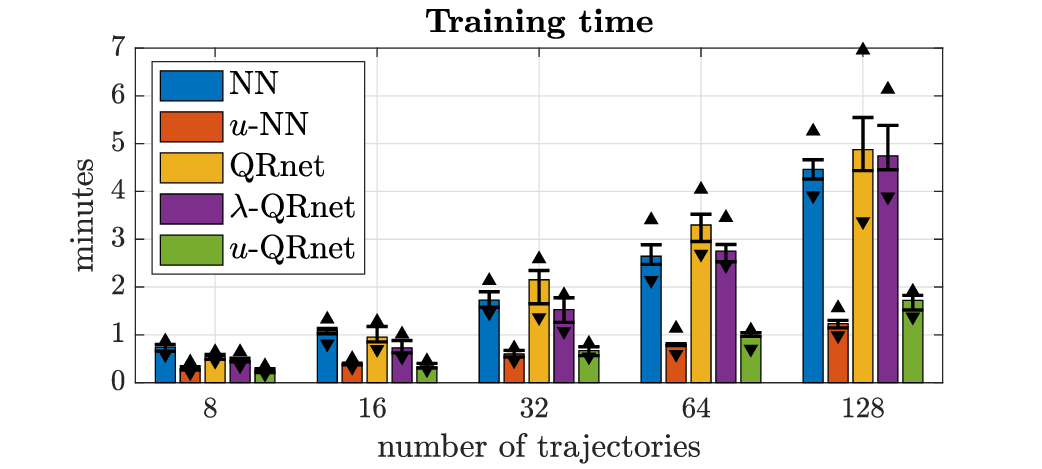}
\caption{Training time for different NN architectures depending on the amount of training data. Bar heights show the medians over ten trials, error bars indicate the 25th and 75th percentiles, and triangles indicate minimum and maximum values.}
\label{fig: burgers training}
\end{figure}

\subsection{Local stability analysis}
\label{sec: local stability}

Let $\widehat{\bm u} (\cdot)$ be a given feedback control policy and $\bar{\bm x}$ be an equilibrium of the controlled system. Note that in general $\bar{\bm x} \neq \bm x_f$. Assuming that $\bm f (\cdot)$ and $\widehat{\bm u} (\cdot)$ are differentiable, the closed loop dynamics can be approximated near $\bar{\bm x}$ by
\begin{equation}
\label{eq: general linearized closed loop model}
\small
\dot{\bm x} \approx \mathcal A \left( \bm x - \bar{\bm x} \right) ,
\ 
\mathcal A
	\coloneqq \left. \frac{\del \bm f}{\del \bm x} \right|_{\bar{\bm x}, \widehat{\bm u} \left( \bar{\bm x} \right)} + \left. \frac{\del \bm f}{\del \bm u} \right|_{\bar{\bm x}, \widehat{\bm u} \left( \bar{\bm x} \right)} \left. \frac{\del \widehat{\bm u}}{\del \bm x} \right|_{\bar{\bm x}} .
\end{equation}
Thus, after synthesizing a feedback controller $\widehat{\bm u} (\cdot)$, we can use a root-finding algorithm to solve $\bm 0 = \bm f \left( \bm x, \widehat{\bm u} (\bm x) \right)$ for an equilibrium $\bar{\bm x}$. Then we can check for local stability by seeing if the closed loop Jacobian $\mathcal A$ is Hurwitz. As noted in \cite{Izzo2021_stability}, one benefit of using an NN controller with differentiable activation functions is that the closed loop dynamics are $\mathcal C^1$. This provides an exact closed loop Jacobian which makes it easier to solve for the equilibrium $\bar{\bm x}$ and allows us to use tools from linear systems theory to characterize local stability.

\Cref{fig: burgers eigenvalues} shows the real part of the most positive eigenvalue of the closed loop Jacobians for the set of NNs trained as described in \cref{sec: burgers description} above. We observe that standard NNs must be trained to a high level of test accuracy before they are even locally stable. Achieving this bare minimum local stability requirement necessitates a large training data set and a longer training time. On the other hand, {\em QRnet}, $\lambda${\em -QRnet}, and $u${\em -QRnet} all yield local stability even when they are trained on small data sets. This gives us confidence in the design process: when training such models we can focus on optimal control performance while worrying less about local stability. After training, we can easily check for local stability as described above.

\begin{figure}[t!]
\centering
\includegraphics[width = 0.65 \columnwidth]{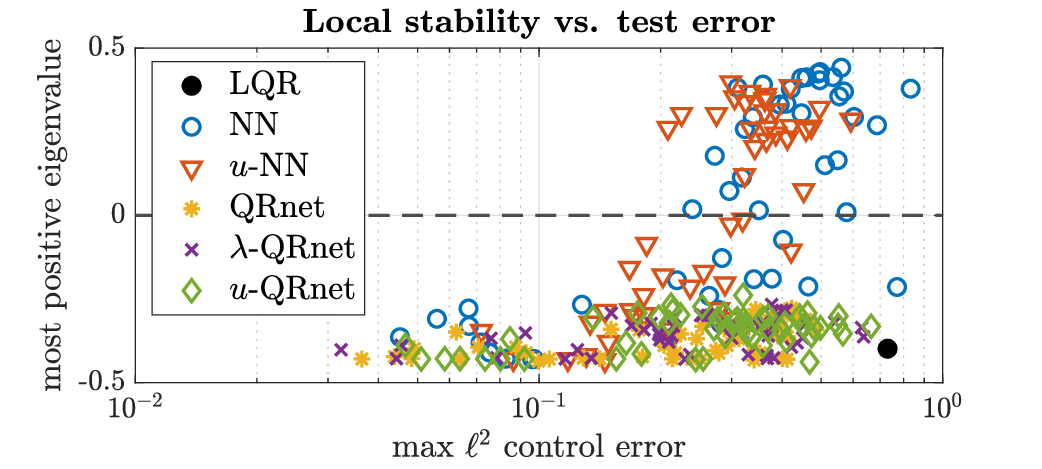}
\caption{Real part of most positive closed loop Jacobian eigenvalue at an equilibrium near $\bm x_f$. Each marker represents a single model. All models below the dashed line are locally stable.}
\label{fig: burgers eigenvalues}
\end{figure}

\subsection{MC nonlinear stability analysis}
\label{sec: numerical semi-global stability}

This test and the one in \cref{sec: numerical suboptimality} are based on MC closed loop simulations of $N_{\text{MC}} = 100$ trajectories. Initial conditions $\bm x_0^{(i)}$, $i = 1, \dots, N_{\text{MC}}$ are randomly selected with norm $\left \Vert \bm x_0^{(i)} \right \Vert = 1.2 \approx \max_{\bm x^{(j)} \in \mathcal D_{\text{train}}} \left \Vert \bm x^{(j)} \right \Vert$, placing them at the edge of the training domain where the NNs may be less accurate and the trajectories harder to control. The same set of $N_{\text{MC}} = 100$ initial conditions was used for every controller.

For each controller and initial condition we simulate the closed loop system until it reaches a steady state or exceeds a large final time. We call the largest final state norm, $\max_i \left \Vert \bm x \left( t_f ; \bm x_0^{(i)} \right) \right \Vert$, the {\em worst-case failure}. If this is sufficiently small (i.e. all trajectories converge to $\bm x_f$) then the system is likely semi-globally stable or ultimately bounded. Conversely, if the controller fails to stabilize even one trajectory then we cannot rely on it.

\Cref{fig: burgers final state} shows the worst-case failures for the set of NNs trained as described in \cref{sec: burgers description} above. We again observe that standard NNs must be trained to a high level of test accuracy before they successfully stabilize the system. More interestingly, although stability on average improves with test accuracy, this is not always the case: some very accurate standard NNs fail to consistently stabilize the system. We further discuss this phenomenon in \cref{sec: main probability result}.

In contrast to the standard NNs, all but two {\em QRnet} controllers stabilize the system, and all the $\lambda${\em -QRnet} and $u${\em -QRnet} controllers successfully stabilize the system, even those with very low test accuracy. Furthermore, the steady states which these systems reach are all zero (up to integration tolerances), while the original {\em QRnet} steady states are larger because the {\em QRnet}-controlled dynamics will have non-zero equilibria. Similarly to \cref{sec: local stability}, these empirical results suggest that the proposed architectures make the control design more reliable, consistently yielding a stabilizing control law even with small data sets and short training times.

\begin{figure}[t!]
\centering
\includegraphics[width = 0.65 \columnwidth]{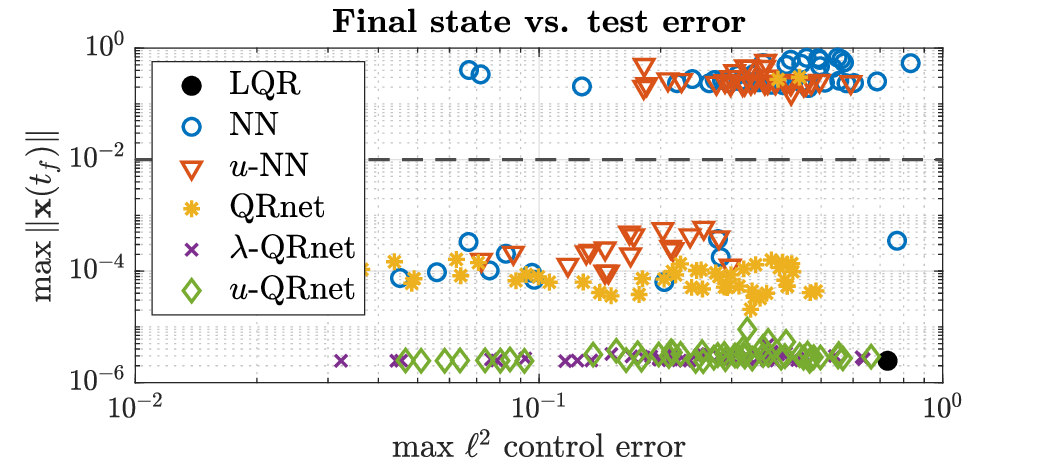}
\caption{Worst-case norm of final state over $N_{\text{MC}} = 100$ simulations. Each marker represents a single model and the dashed line separates unstable (top) from stable (bottom) systems.}
\label{fig: burgers final state}
\end{figure}

\subsection{MC optimality analysis}
\label{sec: numerical suboptimality}

In this work we are interested in both stability and optimality. Optimality is quantified by the accumulated cost for each controller and across all MC simulations. As a reference we can compare these costs to the optimal costs, $V \left( \bm x_0^{(i)} \right)$, computed by solving the BVP \cref{eq: BVP}. \Cref{fig: burgers suboptimality} shows the results of this analysis for the same set of MC simulations conducted in \cref{sec: numerical semi-global stability}. Note that we only show models which were deemed stable in \cref{sec: numerical semi-global stability}, as unstable models can accumulate infinite cost. We can see a clear correlation between higher test accuracy and better performance. All the NN controllers (if stable) follow this trend, and moreover, usually perform better than a simple LQR controller. It follows that {\em QRnet}, $\lambda${\em -QRnet}, and $u${\em -QRnet} improve stability without sacrificing optimality.

\begin{figure}[t!]
\centering
\includegraphics[width = 0.65 \columnwidth]{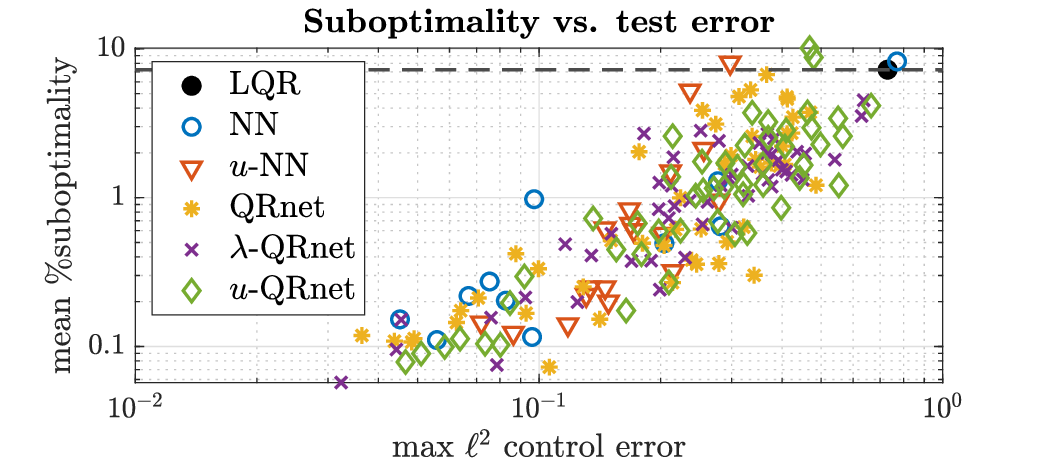}
\includegraphics[width = 0.65 \columnwidth]{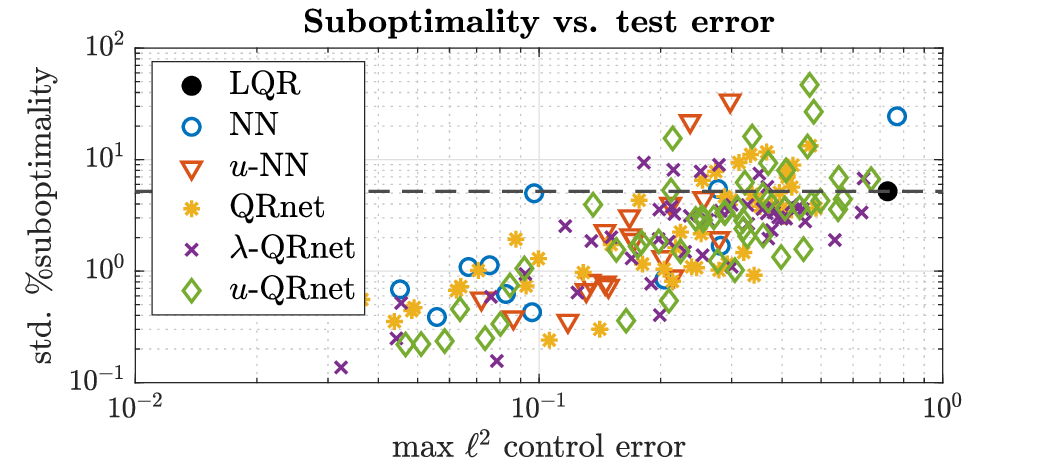}
\caption{Mean percent extra cost vs. BVP data over $N_{\text{MC}} = 100$ simulations. Each marker represents a single model and the dashed line shows the performance of LQR.}
\label{fig: burgers suboptimality}
\end{figure}

%%%%%%%%%%%%%%%%%%%%%%%%%%%%%%%%%%%%%%%%%%%%%%%%%%

\section{Probabilistic stability analysis}
\label{sec: main probability result}

In \cref{sec: numerical semi-global stability} we noticed that plain NNs with high test accuracy could sometimes be unstable. This motivates us to try to futher understand some stability properties of deterministic closed loop systems with control policies $\widehat{\bm u} (\cdot)$ that are trained and tested on randomly-generated data. It is important to emphasize that $\widehat{\bm u} (\cdot)$, once trained, is completely deterministic, but its final performance is influenced by and measured by random sampling. This gives rise to an interesting situation where although the closed loop dynamics are deterministic, we can only speak about stability in a probabilistic sense. Our goal is to provide intuitive yet novel qualitative insights into the behavior of these systems.

\subsection{Deterministic stability}
\label{sec: deterministic stability}

We start with a standard stability result for perturbations to exponentially stable systems. Without loss of generality let the objective equilibrium state be $(\bm x_f = \bm 0, \bm u_f = \bm 0)$. This simplifies notation and easily extends to arbitrary equilibrium states. Next let $r > 0$ and denote the open ball
\begin{equation}
\mathbb B_r \coloneqq \left \{ \left. \bm x \in \mathbb R^n \right| \Vert \bm x \Vert_2 < r \right \} \subseteq \mathbb R^n .
\end{equation}
We will make the following assumptions about the optimally-controlled system, $\dot{\bm x} = \bm f^*_{\text{cl}} (\bm x) \coloneqq \bm f \left( \bm x, \bm u^* (\bm x) \right)$:
\begin{assumption}
\label{assume: Lipschitz Jacobian}
The Jacobian matrix $\del \bm f^*_{\text{cl}} / \del \bm x$ is Lipschitz throughout the closure $\overline{\mathbb B}_r$.
\end{assumption}
\begin{assumption}
\label{assume: exponentially stable}
The goal state $\bm x_f$ is an exponentially stable equilibrium of the optimally-controlled system.
\end{assumption}

We remark that these assumptions are {\em not} restrictive in the present context. In particular, they hold under the standard assumptions that the dynamics are locally $\mathcal C^1$ in $\bm x$ and $\bm u$, the value function is locally $\mathcal C^2$, and that the Ricatti equation \cref{eq: CARE} admits a stabilizing solution. In addition, when we design the cost function \cref{eq: quadratic cost} we generally choose this to satisfy \cref{assume: exponentially stable} as exponential stability provides important robustness properties. 

The following lemma characterizes the stability of the closed loop dynamics if we replace the optimal feedback control $\bm u^* (\cdot)$ with some $\widehat{\bm u} (\cdot)$. Since there is no guarantee that the origin remains an equilibrium under the new control law, stability is stated in terms of ultimate boundededness, the property that trajectories starting close to the origin stay close to the origin.

\begin{lemma}[Ultimate boundedness of perturbed systems]
\label{lemma: ultimate boundedness}
Suppose that \cref{assume: Lipschitz Jacobian,assume: exponentially stable} are satisfied. Let $\widehat{\bm u} : \mathbb R^n \to \mathbb U$ be a continuous feedback control policy and assume that its true maximum control approximation error,
\begin{equation}
\label{eq: true control error}
\delta \coloneqq \max_{\bm x \in \overline{\mathbb B}_r} \left \Vert \widehat{\bm u} (\bm x) - \bm u^* (\bm x) \right \Vert_2 ,
\end{equation}
is sufficiently small. Then trajectories of the closed loop system starting sufficiently close to the origin are ultimately bounded with ultimate bound proportional to $\delta$.
\end{lemma}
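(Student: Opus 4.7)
The plan is to invoke a converse Lyapunov theorem for exponentially stable systems together with a standard ultimate boundedness argument for perturbed nonlinear systems (essentially following Khalil, Chapter~9).

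First I would apply the converse Lyapunov theorem (e.g. Khalil, Theorem 4.14) to the optimally controlled dynamics $\dot{\bm x} = \bm f^*_{\text{cl}}(\bm x)$. \Cref{assume: exponentially stable} gives exponential stability of the origin, and \cref{assume: Lipschitz Jacobian} gives the regularity needed for the converse theorem. This yields positive constants $c_1, c_2, c_3, c_4$ and a $\mathcal{C}^1$ Lyapunov function $W : \overline{\mathbb{B}}_{r_0} \to [0,\infty)$ on some ball $\overline{\mathbb{B}}_{r_0} \subseteq \overline{\mathbb{B}}_r$ satisfying
\begin{equation*}
c_1 \Vert \bm x \Vert_2^2 \leq W(\bm x) \leq c_2 \Vert \bm x \Vert_2^2, \quad
\frac{\partial W}{\partial \bm x} \bm f^*_{\text{cl}}(\bm x) \leq - c_3 \Vert \bm x \Vert_2^2, \quad
\left\Vert \frac{\partial W}{\partial \bm x} \right\Vert_2 \leq c_4 \Vert \bm x \Vert_2 .
\end{equation*}

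Next I would decompose the perturbed dynamics as $\dot{\bm x} = \bm f^*_{\text{cl}}(\bm x) + \bm g(\bm x)$, where $\bm g(\bm x) \coloneqq \bm f(\bm x, \widehat{\bm u}(\bm x)) - \bm f(\bm x, \bm u^*(\bm x))$. Because $\bm f$ is $\mathcal{C}^1$ and $\overline{\mathbb{B}}_r$ together with the image of $\widehat{\bm u}, \bm u^*$ lie in compact sets, $\bm f$ is Lipschitz in $\bm u$ with some constant $L_f$ on this region, so $\Vert \bm g(\bm x) \Vert_2 \leq L_f \delta$ for all $\bm x \in \overline{\mathbb{B}}_r$. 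Differentiating $W$ along the perturbed trajectories then yields
\begin{equation*}
\dot W \leq -c_3 \Vert \bm x \Vert_2^2 + c_4 L_f \delta \Vert \bm x \Vert_2 .
\end{equation*}
Choosing any $\theta \in (0,1)$, this bound gives $\dot W \leq -c_3(1-\theta) \Vert \bm x \Vert_2^2$ whenever $\Vert \bm x \Vert_2 \geq \mu \coloneqq c_4 L_f \delta / (c_3 \theta)$. A standard comparison argument (Khalil, Theorem 4.18) then shows that for $\delta$ small enough that the sublevel set $\{ W \leq c_2 \mu^2 \}$ is contained in $\mathbb{B}_{r_0}$, any trajectory starting inside the sublevel set $\{ W \leq c_1 r_0^2 \}$ remains there and eventually satisfies $\Vert \bm x(t) \Vert_2 \leq \sqrt{c_2/c_1}\, \mu$, an ultimate bound linear in $\delta$.

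Finally I would collect the smallness condition on $\delta$: it has to be small enough that (i) the required sublevel sets nest inside $\overline{\mathbb{B}}_{r_0}$ so the Lyapunov estimates remain valid, and (ii) the initial condition set of interest sits inside the inner sublevel set. The ``sufficiently close to the origin'' hypothesis in the statement corresponds precisely to the initial condition belonging to $\{ W \leq c_1 r_0^2 \}$. I do not foresee a serious obstacle, since the calculation is entirely textbook once the converse Lyapunov function is in hand; the only delicate point is keeping track of how the various constants combine so as to verify that the ultimate bound is genuinely proportional (not merely bounded by a function) of $\delta$, and that the neighborhood on which the Lyapunov bounds hold does not shrink with $\delta$. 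Both follow from choosing $\delta$ small relative to fixed $r_0$, $c_1, \dots, c_4, L_f$ that are determined by the unperturbed system alone.
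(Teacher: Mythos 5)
Your proposal is correct and follows essentially the same route as the paper: a converse Lyapunov function from Khalil's Theorem~4.14 via \cref{assume: Lipschitz Jacobian,assume: exponentially stable}, the decomposition of the closed loop dynamics as the optimal system plus a perturbation bounded by the Lipschitz constant of $\bm f$ in $\bm u$ times $\delta$, and then the standard ultimate boundedness conclusion. The only cosmetic difference is that you unpack the final comparison argument by hand, whereas the paper simply invokes Khalil's Lemma~9.2, which packages that step and yields the explicit bound $B \propto \delta$.
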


\begin{proof}
\Cref{assume: Lipschitz Jacobian,assume: exponentially stable} satisfy the conditions of \cite[Theorem~4.14]{Khalil2002} which establishes the existence of a $\mathcal C^1$ Lyapunov function $W : \mathbb B_r \to [0, \infty)$ for the optimally-controlled system $\dot{\bm x} = \bm f \left( \bm x, \bm u^* (\bm x) \right)$. This $W (\cdot)$ is of quadratic type, meaning that there exist constants $k_1, k_2, k_3, k_4 > 0$ such that\footnote{If $\bm Q$ is positive definite then we can take the quadratic Lyapunov function for the optimal system to be $W (\bm x) = V^{\text{LQR}} (\bm x)$, and the constants $k_1, \dots, k_4$ can be computed based on the eigenvalues of $\bm P$ and $\bm Q$.}
\begin{equation}
\label{eq: converse lyapunov properties}
\renewcommand{\arraystretch}{1.5}
\left \{ \begin{array}{l}
k_1 \Vert \bm x \Vert_2^2 \leq W (\bm x) \leq k_2 \Vert \bm x \Vert_2^2 ,\\
\left[ \frac{\del W}{\del \bm x} (\bm x) \right] \bm f \left( \bm x, \bm u^* (\bm x) \right) \leq - k_3 \Vert \bm x \Vert_2^2 , \\
\left \Vert \frac{\del W}{\del \bm x} (\bm x) \right \Vert_2 \leq k_4 \Vert \bm x \Vert_2 .
\end{array} \right.
\end{equation}

Now we rewrite the closed loop dynamics as a perturbation of the optimally controlled system, $\dot{\bm x} = \bm f \left( \bm x, \bm u^* (\bm x) \right)$:
\begin{equation}
\label{eq: NN-controlled system perturbation}
\dot{\bm x} = \bm f \left( \bm x, \widehat{\bm u} (\bm x) \right)
	= \bm f \left( \bm x, \bm u^* (\bm x) \right) + \left[ \bm f \left( \bm x, \widehat{\bm u} (\bm x) \right) - \bm f \left( \bm x, \bm u^* (\bm x) \right) \right] .
\end{equation}
Then for all $\bm x \in \overline{\mathbb B}_r$ the perturbation satisfies
\begin{equation}
\left \Vert \bm f \left( \bm x, \widehat{\bm u} (\bm x) \right) - \bm f \left( \bm x, \bm u^* (\bm x) \right) \right \Vert_2
\leq L_{\bm u} \delta ,
\end{equation}
where $L_{\bm u}$ is the Lipschitz constant of $\bm f (\cdot)$ with respect to $\bm u$. Hence if there exists $\theta \in (0,1)$ such that
\begin{equation}
\label{eq: ultimate bound error condition}
\delta < \delta^+
	\coloneqq \frac{\theta k_3}{L_{\bm u} k_4} \sqrt{\frac{k_1}{k_2}} r ,
\end{equation}
then \cite[Lemma~9.2]{Khalil2002} guarantees that for all trajectories of the perturbed system \cref{eq: NN-controlled system perturbation} with $\left \Vert \bm x_0 \right \Vert_2 < r \sqrt{k_1 / k_2}$, there exists some finite time $T = T \left( \bm x_0 \right)$ such that
\begin{equation}
\begin{dcases*}
\left \Vert \bm x (t; \bm x_0) \right \Vert_2 \leq K \Vert \bm x_0 \Vert_2 \exp \left( - \alpha t \right) ,
& $0 \leq t < T$, \\
\left \Vert \bm x (t; \bm x_0) \right \Vert_2 \leq B ,
& $T \leq t$,
\end{dcases*}
\end{equation}
with
\begin{equation}
K = \sqrt{\frac{k_2}{k_1}} ,
\quad
\alpha = \frac{(1 - \theta) k_3}{2 k_2} ,
\quad
B = \frac{L_{\bm u} k_4}{\theta k_3} \sqrt{\frac{k_2}{k_1}} \delta .
\end{equation}
\end{proof}

\subsection{Maximum error estimation}
\label{sec: global optimization}

Since we do not have access to the true optimal control $\bm u^* (\cdot)$, we cannot compute the true maximum error $\delta$ needed to apply \cref{lemma: ultimate boundedness}. Hence in practice we need to estimate this using test data. Let $\delta_N$ be the maximum error \cref{eq: max L2} for a set of test points $\bm x^{(i)} \in \overline {\mathbb B}_r$, $i = 1, \dots, N$. Since we only have $\delta_N$ and not $\delta$, this leads us to two important questions:
\begin{enumerate}
\item How can we accurately approximate $\delta_N \approx \delta$ using a reasonably-sized test data set?
\item With what level of confidence can we rely on $\delta_N$ to characterize the stability of the closed loop system?
\end{enumerate}
The simplest approach to estimating $\delta_N$ is with independent uniform samples. But as we will see in \cref{lemma: maximum estimate,eq: uniform F}, this is not sample-efficient in high dimensions. Since generating each test point requires solving the BVP \cref{eq: BVP}, it can become prohibitively costly to generate sufficient data for testing in this way. Global optimization may offer a more tractable approach, and many methods have been developed for this purpose. Since there are too many algorithms to review here, we refer the reader to e.g. \cite{Locatelli2013} for a summary, and note that any method chosen for this application should work without gradient information and not require too many function evaluations. Clearly this is a difficult problem, though one upside is that each BVP which we solve can generate an entire trajectory of test points which can subsequently be used for training new models.

To work towards an answer to the second question, we start with the following lemma which considers the problem of estimating the maximum value of a continuous function on a compact domain using independent samples. It is easy to see how this specializes to computing maximum test errors, and in \cref{sec: stability and test accuracy} we apply this result to stability analysis. Estimates of the gap between $\delta_N$ and $\delta$ for correlated test points\footnote{Such correlations can be induced by using entire trajectories and by various global optimization algorithms.} are beyond the scope of this work.

\begin{lemma}[Maximum estimation]
\label{lemma: maximum estimate}
Let $\mathbb D \subset \mathbb R^n$ be a compact set with non-zero volume and let $g : \mathbb D \to \mathbb [0, \infty)$ be a continuous, non-negative function with maximum value $\delta \coloneqq \max_{\bm x \in \mathbb D} g (\bm x)$. Suppose that $\bm x^{(i)}, i = 1, \dots, N$, are independently sampled according to a probability distribution $\mu (\bm x)$ supported everywhere on $\mathbb D$, and let $\delta_N \coloneqq \max_{i \in \{ 1, \dots, N \}} g \left( \bm x^{(i)} \right)$. Then for any $\epsilon > 0$, we have
\begin{equation}
\Pr \left( \delta > \delta_N + \epsilon \right) = \left[ \Pr \left( \delta - g (\bm x) > \epsilon \right) \right]^N
\end{equation}
with $\Pr \left( \delta - g (\bm x) > \epsilon \right) < 1$, and thus $\{ \delta_N \}_{N=1}^\infty \to \delta$ in probability.
\end{lemma}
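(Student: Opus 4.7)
The plan is to establish the product formula first, then verify that the base probability is strictly less than one using continuity and the support hypothesis, and finally use the one-sided bound $\delta_N \leq \delta$ to conclude convergence in probability.

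First, I would rewrite the event $\{\delta > \delta_N + \epsilon\}$ in a more tractable form. By definition $\delta_N = \max_{i} g(\bm x^{(i)})$, so $\delta > \delta_N + \epsilon$ holds if and only if $\delta - g(\bm x^{(i)}) > \epsilon$ for \emph{every} index $i \in \{1,\dots,N\}$. Since the $\bm x^{(i)}$ are independent and identically distributed, the probability of this joint event factors as the product of the marginal probabilities, which gives the desired identity $\Pr(\delta > \delta_N + \epsilon) = [\Pr(\delta - g(\bm x) > \epsilon)]^N$.

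Next I would show the strict bound $\Pr(\delta - g(\bm x) > \epsilon) < 1$, which is equivalent to $\Pr(\delta - g(\bm x) \leq \epsilon) > 0$. Since $\mathbb D$ is compact and $g$ is continuous, the maximum is attained at some $\bm x^* \in \mathbb D$. Continuity at $\bm x^*$ produces a relatively open neighborhood $U \subseteq \mathbb D$ on which $g(\bm x) > \delta - \epsilon$, so $U$ lies inside the event $\{\delta - g(\bm x) \leq \epsilon\}$. Because $\mu$ is supported everywhere on $\mathbb D$, every non-empty relatively open subset of $\mathbb D$ has strictly positive $\mu$-measure, so $\mu(U) > 0$ and the strict inequality follows. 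This measure-theoretic step, which hinges on correctly interpreting the support hypothesis, is the only real obstacle in the argument.

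Finally, to obtain convergence in probability, I note that by definition $\delta_N \leq \delta$ deterministically, so $|\delta - \delta_N| = \delta - \delta_N$ and the event $\{|\delta - \delta_N| > \epsilon\}$ coincides with $\{\delta > \delta_N + \epsilon\}$. Combining the previous two steps, $\Pr(|\delta - \delta_N| > \epsilon) = [\Pr(\delta - g(\bm x) > \epsilon)]^N$ is a geometric sequence with base strictly less than $1$, hence tends to $0$ as $N \to \infty$. Since this holds for every $\epsilon > 0$, we conclude $\delta_N \to \delta$ in probability, completing the proof.
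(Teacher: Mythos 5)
Your proposal is correct and follows essentially the same route as the paper's proof: factor the joint event using independence, use compactness plus continuity to find a relatively open neighborhood of a maximizer lying inside the event $\{\delta - g(\bm x) \leq \epsilon\}$, invoke the full-support hypothesis to give it positive measure, and conclude via the deterministic bound $\delta_N \leq \delta$. The only cosmetic difference is that the paper treats the case $\epsilon \geq \delta$ separately (where the probability is exactly zero), while your neighborhood argument handles all $\epsilon > 0$ uniformly.
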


\begin{proof}
For any $N, \epsilon > 0$ we compute
\begin{align*}
\Pr \left( \delta > \delta_N + \epsilon \right)
	=& \Pr \left( \delta - \max_{i \in \{ 1, \dots, N \}} g \left( \bm x^{(i)} \right) > \epsilon \right) \\
	=& \Pr \left( \delta - g \left( \bm x^{(i)} \right) > \epsilon, i = 1, \dots, N \right) \\
	=& \left[ \Pr \left( \delta - g (\bm x) > \epsilon \right) \right]^N ,
\end{align*}
where we have used the fact that $\bm x^{(i)}$ are independent. Let
\begin{equation}
F (\delta, \epsilon) \coloneqq \Pr \left( \delta - g (\bm x) > \epsilon \right) .
\end{equation}
We claim that $F (\delta, \epsilon) < 1$ for all $\epsilon > 0$. To see this, first since $g (\bm x) \geq 0$ we immediately have $F (\delta, \epsilon) = 0$ for all $\epsilon \geq \delta$. Next if $0 < \epsilon < \delta$ let $\bm z \in \mathbb D$ be any point which achieves the true maximum, i.e. $g (\bm z) = \delta$. By continuity of $g (\cdot)$, we know that there must exist some $d > 0$ such that for all $\bm x$ in the neighborhood $\Vert \bm z - \bm x \Vert_2 < d$, we have
$$
\left| g (\bm z) - g (\bm x) \right| 
	= \left| \delta - g (\bm x) \right|
	= \delta - g (\bm x)
	\leq \epsilon .
$$
We also know that for any $d > 0$, the intersection
$$
\mathbb D \cap \left \{ \bm x \in \mathbb R^n \left| \Vert \bm z - \bm x \Vert_2 < d \right. \right \}
	= \left \{ \bm x \in \mathbb D \left| \Vert \bm z - \bm x \Vert_2 < d \right. \right \}
$$
is non-empty and open. Then because $\mu (\bm x)$ is supported everywhere on $\mathbb D$, it follows that the set $\left \{ \bm x \in \mathbb D \left| \delta - g (\bm x) \leq \epsilon \right. \right \}$ must have non-zero probability mass and hence $F (\delta, \epsilon) < 1$. Therefore
\begin{equation}
\label{eq: converges in probability}
\lim_{N \to \infty} \Pr \left( | \delta - \delta_N | > \epsilon \right) =
	\lim_{N \to \infty} \Pr \left( \delta > \delta_N + \epsilon \right)
	= \lim_{N \to \infty} \left[ F (\delta, \epsilon) \right]^N
	= 0 .
\end{equation}
Furthermore, $\delta \geq \delta_N$ implies $| \delta - \delta_N | = \delta - \delta_N$ and so the sequence $\{ \delta_N \}_{N=1}^\infty$ converges in probability to $\delta$. 
\end{proof}

Eq. \cref{eq: converges in probability} appears promising since $\left[ F (\delta, \epsilon) \right]^N$ decrease exponentially in $N$, but it is worth pointing out that $F (\delta, \epsilon)$ depends on the volume of the sample domain, how smooth $g (\cdot)$ is, and of course the method for sampling $\bm x^{(i)}$. For example, suppose that $\bm x^{(i)}$ are sampled uniformly from $\mathbb D$ so that
\begin{equation}
\label{eq: uniform F}
F (\delta, \epsilon)
	= \frac{\int_{\left \{ \bm x \in \mathbb D \left| \delta - g (\bm x) > \epsilon \right. \right \}} d \bm x}{\int_{\mathbb D} d \bm x} .
\end{equation}
By inspection we can see that $F (\delta, \epsilon)$ is smaller if $g (\bm x)$ is flatter, and conversely $F (\delta, \epsilon) \to 1$ as the domain grows (which happens if we increase the dimension). While $\left[ F (\delta, \epsilon) \right]^N$ does decreases exponentially in $N$ once we fix $\epsilon$ and the test domain $\mathbb D$, because of the strong dependence on the problem dimension in practice we should prefer optimization-based strategies over random sampling.

\subsection{Probabilistic stability based on test accuracy}
\label{sec: stability and test accuracy}

Now we are ready to apply \cref{lemma: maximum estimate} to find the probability that the our error estimate $\delta_N$ is close enough to $\delta$ such that the feedback controller $\widehat{\bm u} (\cdot)$ meets the requirements for \cref{lemma: ultimate boundedness}, and hence sufficient conditions for ultimate boundededness.

\begin{proposition}[Probability of ultimate boundedness]
\label{prop: ultimate bound probability}
Suppose that \cref{assume: Lipschitz Jacobian,assume: exponentially stable} are satisfied and let $\widehat{\bm u} : \mathbb R^n \to \mathbb U$ be a continuous feedback control. Consider test points $\bm x^{(i)}, i = 1, \dots, N$, independently sampled according to some probability distribution supported everywhere on $\overline{\mathbb B}_r$. Let $\delta$ bet the true unknown control error \cref{eq: true control error}, $\delta_N$ be the error estimate \cref{eq: max L2}, and $\delta^+$ be a constant defined in \cref{eq: ultimate bound error condition}. If $\delta_N < \delta^+$, then the probability that the error estimate is accurate enough to determine if the closed loop system satisfies sufficient conditions for ultimate boundedness is given by
\begin{equation}
\label{eq: P (delta N)}
P = 1 - \left[ F \left( \delta, \epsilon_N \right) \right]^N > 0 ,
\end{equation}
where $\epsilon_N \coloneqq \delta^+ - \delta_N$ and
\begin{equation}
\label{eq: general F (eps) control}
F \left( \delta, \epsilon_N \right)
	= \Pr \left( \delta - \left \Vert \widehat{\bm u} (\bm x) - \bm u^* (\bm x) \right \Vert_2 > \epsilon_N \right) 
	< 1 .
\end{equation}
\end{proposition}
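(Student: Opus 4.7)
The plan is to reduce \Cref{prop: ultimate bound probability} to a direct application of \Cref{lemma: ultimate boundedness} in combination with \Cref{lemma: maximum estimate}. \Cref{lemma: ultimate boundedness} already asserts that whenever the true maximum error $\delta$ satisfies $\delta < \delta^+$, the closed loop system is ultimately bounded. We only have access to $\delta_N$, and because $\delta_N \leq \delta$ deterministically, the observation $\delta_N < \delta^+$ does not by itself imply $\delta < \delta^+$. The remaining task is therefore purely to quantify, from the sample, the probability that $\delta$ also falls below the threshold $\delta^+$.

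To invoke \Cref{lemma: maximum estimate}, I would set $g(\bm x) \coloneqq \left\Vert \widehat{\bm u}(\bm x) - \bm u^*(\bm x) \right\Vert_2$ on the compact domain $\mathbb D = \overline{\mathbb B}_r$. By hypothesis $\widehat{\bm u}$ is continuous, and under \Cref{assume: Lipschitz Jacobian,assume: exponentially stable} together with the Hamiltonian minimization condition \cref{eq: Hamiltonian minimization condition}, $\bm u^*$ is continuous on $\overline{\mathbb B}_r$; hence $g$ is a continuous, non-negative function on a compact set. The test points $\bm x^{(i)}$ are drawn independently from a distribution supported everywhere on $\overline{\mathbb B}_r$, so the hypotheses of \Cref{lemma: maximum estimate} are met, and the identifications $\delta = \max_{\bm x} g(\bm x)$ and $\delta_N = \max_i g(\bm x^{(i)})$ coincide with the definitions in the proposition.

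The key specialization is to take $\epsilon = \epsilon_N \coloneqq \delta^+ - \delta_N$, which is strictly positive by the hypothesis $\delta_N < \delta^+$. Conditional on the observed realization of $\delta_N$, the event required to activate \Cref{lemma: ultimate boundedness}, namely $\{\delta < \delta^+\}$, is exactly $\{\delta - \delta_N < \epsilon_N\}$. Applying \Cref{lemma: maximum estimate} at this value of $\epsilon$ then gives
\[
\Pr\bigl(\delta > \delta_N + \epsilon_N\bigr) = \left[ F(\delta, \epsilon_N) \right]^N ,
\]
from which the desired probability of stability follows as $P = 1 - \left[ F(\delta, \epsilon_N) \right]^N$. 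The explicit form of $F$ in \cref{eq: general F (eps) control} is just the specialization of $F$ from \Cref{lemma: maximum estimate} with the chosen $g$, and both $F(\delta, \epsilon_N) < 1$ and hence $P > 0$ are inherited verbatim from that lemma.

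The only real conceptual obstacle I anticipate is that $\epsilon_N$ itself depends on the random sample, so formally one must condition on the observed $\delta_N$ before reading off the bound with $\epsilon = \epsilon_N$ held fixed. Once this conditioning is stated cleanly, the proof is essentially bookkeeping; the substantive probabilistic content has already been absorbed into \Cref{lemma: maximum estimate}.
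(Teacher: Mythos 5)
Your proposal is correct and follows essentially the same route as the paper: specialize \cref{lemma: maximum estimate} to $g(\bm x) = \left\Vert \widehat{\bm u}(\bm x) - \bm u^*(\bm x) \right\Vert_2$ on $\overline{\mathbb B}_r$ with $\epsilon = \epsilon_N = \delta^+ - \delta_N$, read off $P = 1 - \left[ F(\delta, \epsilon_N) \right]^N$, and invoke \cref{lemma: ultimate boundedness} to interpret the event $\{\delta < \delta_N + \epsilon_N\}$ as the sufficient condition for ultimate boundedness. Your closing remark that $\epsilon_N$ is sample-dependent and so one must condition on the observed $\delta_N$ is a subtlety the paper's proof passes over silently, so you are if anything slightly more careful than the original.
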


\begin{proof}
Noting that \cref{assume: Lipschitz Jacobian} implies $\bm u^* (\cdot)$ is continuous in $\bm x$, \cref{lemma: maximum estimate} yields
$$
P \left( \delta, \delta_N, N \right) \coloneqq \Pr \left( \delta < \delta_N + \epsilon_N \right)
	= 1 - \Pr \left( \delta \geq \delta_N + \epsilon_N \right)
	= 1 - \left[ F \left( \delta, \epsilon_N \right) \right]^N .
$$
Though $\delta$ is fixed, \cref{eq: P (delta N)} tells us the probability that $\delta_N$ is close {\em enough} to $\delta$ so that we can expect \cref{eq: ultimate bound error condition} holds, which by \cref{lemma: ultimate boundedness} implies the trajectories of closed loop system are ultimately bounded.
\end{proof}

While stability is a deterministic property of the system (i.e. the system either is stable or it isn't), we can loosely think of \cref{prop: ultimate bound probability} as a (conservative) probabilistic stability condition based on test error. In general, it may be difficult to apply this result quantitatively because it requires knowledge about the Lyapunov function $W (\cdot)$ and the true maximum error $\delta$. Nevertheless, we believe that \cref{prop: ultimate bound probability} begins to qualitatively explain the phenomenon presented in \cref{sec: numerical results}, where NNs with similar test accuracy can sometimes produce stable systems and other times yield trajectories like the one shown in \cref{fig: burgers not ultimately bounded}.

%%%%%%%%%%%%%%%%%%%%%%%%%%%%%%%%%%%%%%%%%%%%%%%%%%

\section{Summary and future work}
\label{sec: conclusion}

In this paper we have used practical closed loop stability and optimality tests to show that NN feedback controllers can fail to stabilize a system, even when they are trained to a high degree of accuracy. This occurs frequently enough that it cannot be ignored, so to increase the acceptability of NN feedback controllers we need more rigorous testing and more reliable model architectures. {\em QRnet} \cite{Nakamura2021_LCSS} is a first step in this direction, empirically improving stability properties with the addition of an LQR component. $\lambda${\em -QRnet} and $u${\em -QRnet} extend this idea, retaining the enhanced stability and reliablity of {\em QRnet} while reducing computation time and ensuring that the goal state is an equilibrium.

Finally, \cref{sec: main probability result} introduces a new theoretical perspective to qualitatively describe explain the stability properties of NN-controlled dynamic systems. This preliminary result seeks to invoke classical stability results when their conditions cannot be directly checked. In future work we aim to make the theory more practical, as well as develop other perspectives relating density of training trajectories to probabilistic stability metrics. We also intend to explore reasons why and to what extent $\lambda${\em -QRnet} and $u${\em -QRnet} improve system stability. Such theoretical advances will be necessary if supervised learning is to become a reliable and commonly accepted control design method.

\bibliography{bibliography_master}

\end{document}